\newtheorem{thm}{Theorem}[section]
\newtheorem{cor}[thm]{Corollary}
\newtheorem{lemma}[thm]{Lemma}
\newtheorem{rem}[thm]{Remark}
\newtheorem{eg}[thm]{Example}
\journal{LAA}
\begin{document}

\begin{frontmatter}

\title{Revisiting a sharpened version of Hadamard's determinant inequality}
 
\author{Minghua Lin\fnref{myfootnote}}
\address{School of Mathematics and Statistics, Xi'an Jiaotong University, Xi'an 710049, China}
\fntext[myfootnote]{Email: mh.lin@xjtu.edu.cn}

 \author{Gord Sinnamon \fnref{gs}}
 \address{Department of Mathematics, University of Western Ontario, London, N6A 5B7, ON, Canada}
\fntext[gs]{Email: sinnamon@uwo.ca}

\begin{abstract}  Hadamard's determinant inequality  was refined and generalized by Zhang and Yang in [Acta Math. Appl. Sinica 20 (1997) 269-274]. Some special cases of the result were rediscovered recently by Rozanski,   Witula and Hetmaniok in [Linear Algebra Appl. 532 (2017) 500-511].  We revisit the result in the case of positive semidefinite matrices, giving a new proof in terms of majorization and a complete description of the conditions for equality in the positive definite case.  We also mention a block extension, which makes use of a result of Thompson in the 1960s. 
\end{abstract}

\begin{keyword}    Hadamard's determinant inequality, positive semidefinite matrix, majorization.
  \MSC[2010] 15A45, 15A60
\end{keyword}

\end{frontmatter}


\section{Introduction} 
 
 Perhaps the best known determinantal inequality in mathematical sciences is the Hadamard inequality  (e.g., \cite[p. 505]{HJ13}) which says that 
 if $A=(a_{ij})$ is an $n\times n$ (Hermitian) positive definite matrix, then 
 \begin{eqnarray}\label{h1} \det A\le a_{11}\cdots a_{nn},\end{eqnarray} 
 and equality holds if and only if $A$ is diagonal.  
  

Two decades ago, Zhang and Yang obtained an elegant sharpening of the Hadamard inequality.  They proved it for a more general class of matrices and included a term involving off-diagonal entries of the matrix.

Let $A$ be an $n\times n$ complex matrix. For   a non-empty proper subset $G$ of $\{1,\dots,n\}$, let $A[G]$ denote the principal submatrix of $A$ formed by discarding the $i$th row and column of $A$, for each $i\notin G$. We say $A$ is an $F$-matrix if for all such $G$, $\det(A[G])\ge0$ and $\det(A)\le\det(A[G])\det(A[G^C])$ That is, all principal minors of $A$ are non-negative and they satisfy a Fischer-type inequality. Standard results show that the Hermitian $F$-matrices are exactly the positive semi-definite matrices.

 \begin{thm}\label{ZYmain} (Zhang-Yang \cite{ZY97}) If $A=(a_{ij})$ is an $F$-matrix, then $a_{ij}a_{ji}\ge0$ for all $i,j$ and, if $\sigma$ is a non-trivial permutation of $\{1,\dots,n\}$, then
\begin{equation}\label{maxZY}
\det(A)+\Big(\prod_{i=1}^na_{i,\sigma(i)}a_{\sigma(i),i}\Big)^{1/2}\le \prod_{i=1}^na_{ii}.
\end{equation}
\end{thm}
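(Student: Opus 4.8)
The plan is to prove the two assertions separately and, for the inequality \eqref{maxZY}, to peel the statement down by a chain of reductions until only a single-cycle estimate remains. For the sign condition I would use that every principal submatrix of an $F$-matrix is again an $F$-matrix, a standard hereditary feature of the Fischer class, and read the definition off the $2\times2$ submatrix $A[\{i,j\}]$: there the Fischer inequality $\det A[\{i,j\}]\le a_{ii}a_{jj}$ is literally $a_{ii}a_{jj}-a_{ij}a_{ji}\le a_{ii}a_{jj}$, that is, $a_{ij}a_{ji}\ge0$. Consequently every factor $a_{i,\sigma(i)}a_{\sigma(i),i}$ in \eqref{maxZY} is nonnegative, the square root is real, and $0\le a_{i,\sigma(i)}a_{\sigma(i),i}\le a_{ii}a_{\sigma(i),\sigma(i)}$.

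Turning to \eqref{maxZY}, I would first dispose of the degenerate case: if some $a_{ii}=0$, then Fischer with $G=\{i\}$ forces $\det A=0$, while $a_{ij}a_{ji}\le a_{ii}a_{jj}=0$ together with $a_{ij}a_{ji}\ge0$ annihilates the product term, so both sides vanish. Hence I may assume every $a_{ii}>0$ and normalize through the positive diagonal congruence $\tilde A=D^{-1/2}AD^{-1/2}$ with $D=\operatorname{diag}(a_{11},\dots,a_{nn})$. One checks that $\tilde A$ is again an $F$-matrix, that its diagonal is all ones, and that this divides both sides of \eqref{maxZY} by $\prod_i a_{ii}$; so it suffices to prove $\det\tilde A+\big(\prod_i\tilde a_{i,\sigma(i)}\tilde a_{\sigma(i),i}\big)^{1/2}\le1$ in the unit-diagonal case.

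Next I would reduce an arbitrary nontrivial $\sigma$ to a single cycle. Writing $F$ for the fixed points and $M$ for the moved points of $\sigma$, Fischer gives $\det\tilde A\le\det\tilde A[F]\det\tilde A[M]\le\det\tilde A[M]$, using Hadamard on the unit-diagonal block $\tilde A[F]$, and the fixed points contribute the factor $1$ to the product term. If $\sigma$ restricted to $M$ has cycles $C_1,\dots,C_k$, then the multiplicative Fischer inequality (obtained by iterating the two-block version) gives $\det\tilde A[M]\le\prod_l d_l$ with $d_l=\det\tilde A[C_l]\ge0$, while the product term factors as $\prod_l p_l$ with $p_l=\big(\prod_{i\in C_l}\tilde a_{i,\sigma(i)}\tilde a_{\sigma(i),i}\big)^{1/2}\ge0$. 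Granting the single-cycle bound $d_l+p_l\le1$, the elementary superadditivity $\prod_l d_l+\prod_l p_l\le\prod_l(d_l+p_l)\le1$ for nonnegative reals finishes the general case.

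This leaves the crux: a unit-diagonal $F$-matrix with $\sigma$ a single $n$-cycle, which after relabelling I take to be $\sigma=(1\,2\,\cdots\,n)$, so that the goal is $\det A\le1-\big(\prod_{i=1}^n a_{i,i+1}a_{i+1,i}\big)^{1/2}$ with indices read mod $n$. In the Hermitian case, where $F$-matrices coincide with positive semidefinite matrices, I would represent $A$ as the Gram matrix of unit vectors $b_1,\dots,b_n$ and rewrite the goal as $\det A+\prod_i|\langle b_i,b_{i+1}\rangle|\le1$. This is exactly where majorization does the work: the plain Hadamard bound $\det A\le1$ is Schur-concavity of the product under the Schur majorization $(a_{11},\dots,a_{nn})\prec(\lambda_1,\dots,\lambda_n)$ of the diagonal by the spectrum, and I would sharpen this to absorb the cyclic term, collapsing \eqref{maxZY} to a scalar inequality in the moduli $r_i=|a_{i,i+1}|\in[0,1]$ that the arithmetic--geometric mean inequality settles (for $n=3$ it is precisely $3r_1r_2r_3\le r_1^2+r_2^2+r_3^2$, which holds because each $r_i\le1$). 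I expect this single-cycle estimate to be the main obstacle: for $n=3$ the determinant has only the two cyclic terms beyond the diagonal, but for larger $n$ it carries many non-cyclic permutation terms, and the entire purpose of the majorization route is to bound their aggregate effect without expanding $\det A$ term by term. The passage to a general, non-Hermitian $F$-matrix can then be approached either directly from the $F$-matrix axioms by an analogous single-cycle estimate, or by reduction to the Hermitian case using that the diagonal similarities $A\mapsto DAD^{-1}$ (with $D$ positive or unimodular diagonal) fix $\det A$, the diagonal, and every product $a_{ij}a_{ji}$ while moving only the off-diagonal phases; in either form this step also deserves care.
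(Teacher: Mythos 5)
Your chain of reductions is sound as far as it goes for positive semi-definite $A$: the degenerate case of a zero diagonal entry, the unit-diagonal normalization, the splitting off of fixed points via Fischer and Hadamard (which is exactly how the paper deduces Corollary~\ref{refinedcor} from its derangement theorem), and the cycle decomposition with the superadditivity $\prod_l d_l+\prod_l p_l\le\prod_l(d_l+p_l)\le1$ are all correct. But the proposal stops precisely at its crux: the single-cycle estimate is established only for $n=3$ (and trivially for $n=2$), and for $n\ge4$ you offer only the hope that majorization can be ``sharpened to absorb the cyclic term,'' while yourself acknowledging that the many non-cyclic permutation terms in $\det A$ are the obstacle. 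That sharpening is the entire content of the theorem, and no candidate scalar inequality or mechanism is given for general $n$, so the argument is incomplete exactly where it matters. For comparison, the paper needs no cycle decomposition at all: writing $A=V^*\Lambda V$ with $V$ unitary and $t=\min_i\lambda_i$, Lemma~\ref{2.L} (Schur majorization applied to the shifted variables, hence to all elementary symmetric functions at once, not just $e_n$) gives $\det A=P(0)-P(t)\le Q(0)-Q(t)=\prod_i a_{ii}-\prod_i(a_{ii}-t)$, and since $\sum_j\bar v_{ji}v_{j,\tau(i)}=0$ whenever $\tau(i)\ne i$, the shifted Cauchy--Schwarz estimate $|a_{i,\tau(i)}|^2\le(a_{ii}-t)(a_{\tau(i),\tau(i)}-t)$ yields $\prod_i|a_{i,\tau(i)}|\le\prod_i(a_{ii}-t)$ for every derangement $\tau$ simultaneously. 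This shift by $\lambda_{\min}$ is the missing idea: it bounds the aggregate effect of the non-cyclic terms without ever expanding the determinant.

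Two further soft spots. First, the proposed passage to non-Hermitian $F$-matrices by diagonal similarity cannot work: if $DAD^{-1}$ is Hermitian then in particular $a_{ij}=0\iff a_{ji}=0$, and, for example, upper triangular matrices with positive diagonal are $F$-matrices violating this; so only your ``direct from the axioms'' route remains, and you do not carry it out (this is Zhang--Yang's actual territory). Second, your $2\times2$ sign argument and your iterated Fischer/Hadamard steps presuppose that principal submatrices of $F$-matrices are again $F$-matrices; under the paper's literal definition this heredity is not automatic --- the matrix $\left(\begin{smallmatrix}1&1&0\\-1&1&0\\0&0&0\end{smallmatrix}\right)$ satisfies all the displayed conditions (every principal minor is non-negative and every complementary split holds with $0\le0$) yet has $a_{12}a_{21}=-1$, so the definition must be read hereditarily for the sign claim even to be true, and your appeal to a ``standard hereditary feature'' needs to be replaced by that stronger reading. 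Note finally that the paper itself proves Theorem~\ref{ZYmain} only for positive semi-definite matrices (Corollary~\ref{refinedcor}), citing \cite{ZY97} for general $F$-matrices; measured against that target, your proposal matches the paper's outer reduction over the fixed-point set but lacks its core derangement estimate.
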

 
Without noticing the work of  Zhang and Yang  in \cite{ZY97}, which was written in Chinese, Rozanski, Witula and Hetmaniok recently rediscovered some special cases of (\ref{maxZY}) in \cite{RWH2017}. For this reason, we think it worthwhile to bring the nice result of Theorem \ref{ZYmain} to the attention of the linear algebra community.

Besides Theorem \ref{ZYmain}, \cite{ZY97} also contains necessary and sufficient conditions for an $F$-matrix $A$ to satisfy the equation $\det(A)=a_{11}\dots a_{nn}$. The same was done for the equation $\operatorname{permanent}(A)=a_{11}\dots a_{nn}$. However, Zhang and Yang did not give conditions for equality to hold in (\ref{maxZY}). Conditions for equality were included by Rozanski, et al. for the special cases considered in \cite{RWH2017}.

There are  multiple known ways to prove  the Hadamard inequality (\ref{h1}) in the literature (see, e.g.,  \cite{Hol07, HJ13, MO82}). We recall that one insightful way of seeing the Hadamard  inequality is via Schur's majorization inequality (see Lemma \ref{lem1}). For a quick summary of the intimate connection between majorization and determinant inequalities, we refer to \cite{Lin17}. 

Our initial motivation was that since the original Hadamard inequality (\ref{h1}) is immediate from   majorization (see, e.g.  \cite[p. 44]{And94}, \cite[p. 67]{Zha13}), it would be nice if Theorem \ref{ZYmain}  could also be seen from that perspective.  In this note, we give a new proof of Theorem \ref{ZYmain} for positive semi-definite matrices using majorization techniques. Our results include necessary and sufficient conditions for equality to hold when the matrix $A$ is positive definite.
 
Before proceeding, let us fix some notation. For a vector $x\in\mathbb{R}^n$, we denote by $x^{\downarrow}=(x_1^{\downarrow}, \ldots, x_n^{\downarrow})\in\mathbb{R}^n$ the vector with the same components as $x$, but sorted in nonincreasing order. Given $x, y\in \mathbb{R}^n$, we say that $x$  majorizes $y$ (or $y$ is majorized by $x$),  written as $x\succ y$, if
 $$
\sum_{i=1}^k x_i^{\downarrow} \geq \sum_{i=1}^k y_i^{\downarrow} \quad \text{for } k=1,\dots, n-1
$$ 
and equality holds at $k=n$.   
   
Three basic facts about majorization are given below. The first is a matrix characterization, the next is Schur's majorization inequality, and the last is a consequence for the elementary symmetric functions. A {\it doubly stochastic matrix} is square matrix with non-negative entries and all row and column  sums equal to 1.

\begin{lemma}\label{lem0} \cite[p. 253]{HJ13} If $x$ and $y$ are real row vectors then $x$ majorizes $y$ if and only if there exists a doubly stochastic matrix $S$ such that $y=xS$. 
\end{lemma}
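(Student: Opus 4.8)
The plan is to prove the two implications separately, relying on the Birkhoff--von Neumann theorem for the easy direction and on the method of $T$-transforms for the hard one.

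For the ``if'' direction I would assume $y=xS$ with $S$ doubly stochastic and deduce $x\succ y$. By the Birkhoff--von Neumann theorem, $S$ is a convex combination of permutation matrices, say $S=\sum_k\lambda_k P_k$ with $\lambda_k\ge0$ and $\sum_k\lambda_k=1$, so that $y=\sum_k\lambda_k\,(xP_k)$ is a convex combination of coordinate permutations of $x$. The key observation is that for any real vector $z$ and any $k$ one has $\sum_{i=1}^k z_i^{\downarrow}=\max_{|I|=k}\sum_{i\in I}z_i$, the maximum taken over all $k$-element index sets $I$. Applying this to $y$, fixing a maximizing set $I$, and using $\sum_{i\in I}(xP_k)_i\le\sum_{i=1}^k x_i^{\downarrow}$ (since each $xP_k$ is a rearrangement of $x$), I obtain $\sum_{i=1}^k y_i^{\downarrow}\le\sum_{i=1}^k x_i^{\downarrow}$ for every $k$. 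Equality at $k=n$ follows because the column sums of $S$ equal $1$, forcing $\sum_j y_j=\sum_i x_i$. Hence $x\succ y$.

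For the ``only if'' direction I would assume $x\succ y$ and construct $S$. Since permutation matrices are doubly stochastic and products of doubly stochastic matrices are doubly stochastic, I may first permute coordinates to arrange $x=x^{\downarrow}$ and $y=y^{\downarrow}$. If $x\ne y$, I would choose $p$ to be the largest index with $x_p>y_p$ and $q$ the smallest index exceeding $p$ with $x_q<y_q$; the majorization inequalities together with the sorted ordering guarantee such indices exist and that $x_p>y_p\ge y_q>x_q$. I would then apply the $T$-transform on coordinates $p,q$ that transfers the amount $\delta=\min(x_p-y_p,\,y_q-x_q)$ from position $p$ to position $q$, i.e. multiplication by the doubly stochastic matrix $\lambda I+(1-\lambda)Q$ where $Q$ transposes $p$ and $q$ and $\lambda$ is chosen to realize $\delta$. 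By construction either the $p$th or the $q$th coordinate now agrees with $y$, so the number of matched coordinates strictly increases, while one checks that the sorted order and the relation $(\,\cdot\,)\succ y$ are preserved. After finitely many such steps the running vector equals $y$, and the product of the corresponding $T$-transform matrices is the desired $S$.

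The main obstacle is the ``only if'' direction, and specifically verifying that each $T$-transform preserves majorization of the intermediate vector over $y$ (equivalently, that the chosen transfer violates no partial-sum inequality) and that the procedure terminates. Both hinge on the careful choice of the pair $(p,q)$ and the step size $\delta$; once these are in place, the accumulated doubly stochastic product $S$ satisfies $y=xS$, completing the proof.
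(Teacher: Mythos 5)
The paper offers no proof of this lemma at all---it is quoted from \cite[p.~253]{HJ13}---and your sketch is exactly the classical Hardy--Littlewood--P\'olya argument given in such references: the Birkhoff--von Neumann decomposition for the ``if'' direction and a finite chain of $T$-transforms for the ``only if'' direction, so there is nothing in the paper itself to contrast it with. Your outline is correct, and the step you flag as the main obstacle does close with your choice of $(p,q)$: maximality of $p$ and minimality of $q$ force $x_i=y_i$ for every $i$ with $p<i<q$, so for $p\le k<q$ the surplus $\sum_{i\le k}(x_i-y_i)$ equals $\sum_{i\le p}(x_i-y_i)\ge x_p-y_p\ge\delta$, whence transferring $\delta$ preserves majorization over $y$; the sorted order also survives since $x_p-\delta\ge y_p\ge y_q\ge x_q+\delta$, and termination holds in at most $n-1$ steps because both altered coordinates were mismatched and at least one becomes matched. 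One cosmetic slip: for row vectors with $y=xS$ one has $\sum_j y_j=\sum_i x_i\sum_j s_{ij}$, so it is the \emph{row} sums of $S$, not the column sums, that give equality of totals---immaterial here, since $S$ is doubly stochastic.
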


\begin{lemma}\label{lem1} \cite[p. 249]{HJ13}  The eigenvalues  of a Hermitian matrix majorize its diagonal entries. \end{lemma}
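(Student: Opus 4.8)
The plan is to obtain the majorization directly from the matrix characterization in Lemma~\ref{lem0}, so that essentially no analytic work is needed. First I would invoke the spectral theorem to write the Hermitian matrix $A$ as $A = U\Lambda U^{*}$, where $U = (u_{ij})$ is unitary and $\Lambda = \operatorname{diag}(\lambda_{1},\dots,\lambda_{n})$ collects the (real) eigenvalues. Reading off the $i$th diagonal entry of $U\Lambda U^{*}$ gives $a_{ii} = \sum_{j=1}^{n} |u_{ij}|^{2}\lambda_{j}$, exhibiting each diagonal entry as a weighted average of the eigenvalues.

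Next I would set $S = (s_{ij})$ with $s_{ij} = |u_{ij}|^{2}$ and observe that the unitarity of $U$---that is, $\sum_{j} |u_{ij}|^{2} = 1$ for every row and $\sum_{i} |u_{ij}|^{2} = 1$ for every column---makes $S$ doubly stochastic. In row-vector notation the identity above reads $a = \lambda S^{T}$, where $a = (a_{11},\dots,a_{nn})$ and $\lambda = (\lambda_{1},\dots,\lambda_{n})$. Since the transpose of a doubly stochastic matrix is again doubly stochastic, Lemma~\ref{lem0} applies with $x = \lambda$ and $y = a$, and yields that $\lambda$ majorizes $a$, which is precisely the assertion that the eigenvalues majorize the diagonal entries.

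The argument is short because Lemma~\ref{lem0} carries the substantive content of the equivalence between majorization and doubly stochastic transformations. The one point genuinely requiring care---and the step I would single out as the crux---is the recognition that the entrywise squared-modulus matrix of a unitary matrix is doubly stochastic; this is where the geometry of the spectral decomposition enters. Beyond that, the only remaining subtlety is bookkeeping the row-versus-column convention, so that the relation is presented exactly in the form $y = xS$ demanded by Lemma~\ref{lem0}.
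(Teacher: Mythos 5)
Your proof is correct. Note that the paper states Lemma~\ref{lem1} without proof, citing \cite[p.~249]{HJ13}, so there is no internal argument to match against; but your route---spectral decomposition, the observation that the entrywise squared-modulus matrix of a unitary is doubly stochastic, and an appeal to Lemma~\ref{lem0}---is exactly the mechanism the paper itself deploys and generalizes in the proof of Lemma~\ref{2.L}, where $B=V^*\Lambda V$ and $S=(|v_{ij}|^2)$ is doubly stochastic under the weaker hypothesis that the rows and columns of $V$ are merely unit vectors (unitarity is not needed for that step). Two small remarks. First, if you write the decomposition as $A=U^*\Lambda U$ rather than $A=U\Lambda U^*$, then $a_{ii}=\sum_{j}|u_{ji}|^2\lambda_j$, i.e.\ $a=\lambda S$ with $S=(|u_{ij}|^2)$ directly, which spares you the transpose bookkeeping you flag at the end; your version with $S^{T}$ is nonetheless correct, since the transpose of a doubly stochastic matrix is doubly stochastic. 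Second, there is no circularity in invoking Lemma~\ref{lem0}: the implication you actually use (that $y=xS$ with $S$ doubly stochastic forces $x\succ y$) is established independently of Schur's theorem, so your argument is a legitimate self-contained proof. It does differ from the common textbook treatment at the cited page, which can proceed directly via trace and interlacing (or Ky Fan variational) estimates on principal submatrices without routing through the doubly stochastic characterization; what your route buys is economy within this paper's toolkit, since Lemma~\ref{lem0} is already on hand and carries all the analytic weight.
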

  
Fix a positive integer $n$ and let $e_k(x)$, $k=1, 2, \ldots, n$, denote the $k$th elementary symmetric function in the $n$ variables $x_1, \ldots, x_n$. See \cite[p.114]{MOA11}. By convention, $e_0(x)=1$. 

\begin{lemma}\label{lem2} \cite[p.115]{MOA11}  Let $x, y\in [0,\infty)^n$. If $n\ge2$ and $x\succ y$, then $e_k(x)\le e_k(y)$ for $k=0, 1, \ldots, n$. If $k>1$ and $x, y\in (0,\infty)^n$ then equality holds if and only if $x^\downarrow=y^\downarrow$.  \end{lemma}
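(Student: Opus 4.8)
The plan is to show that each elementary symmetric function $e_k$ is Schur-concave on $[0,\infty)^n$, and strictly so on the positive orthant. The cases $k=0$ and $k=1$ are immediate and can be disposed of first: $e_0\equiv1$, while $e_1(x)=\sum_i x_i=\sum_i y_i=e_1(y)$ since $x\succ y$ forces equality of the full sums. So I would concentrate on the range $2\le k\le n$.

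For the main inequality I would reduce to a single elementary move. Using the standard fact that $x\succ y$ holds if and only if $y$ can be obtained from $x$ by a finite sequence of $T$-transforms (each averaging two coordinates while fixing the rest), it suffices to prove $e_k(x)\le e_k(z)$ whenever $z$ arises from $x$ by one such transform. Taking the two affected coordinates to be $x_i,x_j$ and writing $e_m^{(ij)}$ for the $m$th elementary symmetric function in the remaining $n-2$ variables, I would expand
\[
e_k=e_k^{(ij)}+(x_i+x_j)\,e_{k-1}^{(ij)}+x_ix_j\,e_{k-2}^{(ij)}.
\]
A $T$-transform preserves the sum $x_i+x_j$ and weakly increases the product $x_ix_j$ (replacing $(x_i,x_j)$ by $(\lambda x_i+(1-\lambda)x_j,\ \lambda x_j+(1-\lambda)x_i)$ changes the product by $\lambda(1-\lambda)(x_i-x_j)^2\ge0$); since the remaining entries, and hence $e_{k-2}^{(ij)}$, are nonnegative, $e_k$ cannot decrease. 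Chaining the transforms yields $e_k(x)\le e_k(y)$.

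For the equality statement with $k\ge2$ and $x,y\in(0,\infty)^n$, the backward direction is trivial because $e_k$ is symmetric. For the forward direction I would track when the single-step inequality is strict. With all coordinates positive and $2\le k\le n$, the factor $e_{k-2}^{(ij)}$ is a sum of products of positive numbers, hence strictly positive; consequently a $T$-transform that genuinely moves mass (parameter $\lambda\in(0,1)$ with $x_i\neq x_j$) strictly increases $x_ix_j$ and therefore strictly increases $e_k$. Thus if $x^\downarrow\neq y^\downarrow$, at least one transform in the chain must be non-trivial, giving $e_k(x)<e_k(y)$; contrapositively, equality forces $x^\downarrow=y^\downarrow$.

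The only delicate point is this equality analysis, where I must guarantee both that $e_{k-2}^{(ij)}>0$ throughout (precisely where positivity of all entries and the hypothesis $k\ge2$ are used) and that a non-trivial $T$-transform is unavoidable once the sorted vectors differ. An alternative to the $T$-transform reduction is to verify the Schur--Ostrowski criterion directly, using $\partial e_k/\partial x_i=e_{k-1}(\hat i)$ together with the identity $\partial e_k/\partial x_i-\partial e_k/\partial x_j=(x_j-x_i)e_{k-2}^{(ij)}$, which gives $(x_i-x_j)(\partial_i e_k-\partial_j e_k)=-(x_i-x_j)^2e_{k-2}^{(ij)}\le0$; but the $T$-transform route makes the equality bookkeeping more transparent.
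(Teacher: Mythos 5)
Your proof is correct, and it is worth noting that the paper does not actually prove this lemma at all: it is quoted from \cite[p.~115]{MOA11}, and the closest the paper comes to an argument is in the proof of Lemma~\ref{2.L}, where strict Schur concavity of $e_2$ is obtained from the textbook machinery (concavity via \cite[A.4]{MOA11}, then strictness via \cite[A.3.a]{MOA11}). Your chain-of-$T$-transforms argument is thus a genuinely different, self-contained route; your closing remark about the Schur--Ostrowski criterion, with the identity $(x_i-x_j)(\partial_i e_k-\partial_j e_k)=-(x_i-x_j)^2e_{k-2}^{(ij)}\le 0$, is essentially the textbook proof, which is shorter if one already has that machinery and generalizes to arbitrary symmetric concave functions. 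What your main route buys is transparency in the equality analysis: the one-step identity $z_iz_j-x_ix_j=\lambda(1-\lambda)(x_i-x_j)^2$ together with the expansion $e_k=e_k^{(ij)}+(x_i+x_j)e_{k-1}^{(ij)}+x_ix_j\,e_{k-2}^{(ij)}$ localizes everything in the single coefficient $e_{k-2}^{(ij)}$, which is $\ge 0$ on $[0,\infty)^n$ (yielding the inequality for all $k$, with $k=0,1$ trivial as you note, and $k-2\le n-2$ so the degree is admissible) and $>0$ on $(0,\infty)^n$ when $2\le k\le n$ (yielding strictness). You also correctly flag the two bookkeeping points that make the equality case airtight: intermediate vectors in the Hardy--Littlewood--P\'olya/Muirhead chain stay in the positive orthant, being convex combinations of positive entries, so every step is weakly increasing and any mass-moving step is strictly increasing; and ``non-trivial'' must mean the multiset of entries changes, since steps with $\lambda\in\{0,1\}$ or acting on equal coordinates (in particular plain swaps) leave $e_k$ fixed --- your parenthetical ``$\lambda\in(0,1)$ with $x_i\neq x_j$'' handles exactly this, so that $x^\downarrow\neq y^\downarrow$ forces a strict step and hence $e_k(x)<e_k(y)$.
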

  
 \section{A new proof of  Theorem \ref{ZYmain} and more} 

In this section, $\Lambda$, $V$ and $B$ will be as follows: Let $\Lambda$ be a diagonal matrix with non-negative diagonal entries $\lambda_1,\dots,\lambda _n$. Let $V=(v_{ij})$ be an $n\times n$ matrix whose rows and columns are all unit vectors, that is, all diagonal entries of $V^*V$ and $VV^*$ are equal to $1$. Set $B=(b_{ij})=V^*\Lambda V$. 
It is important to point out that, in general, $\prod_{i=1}^n\lambda_i\ne \det(B)$, although they are equal when $V$ is unitary.

\begin{lemma}\label{2.L} Let $P(t)=\prod_{i=1}^n(\lambda_{i}-t)$ and $Q(t)=\prod_{i=1}^n(b_{ii}-t)$. Fix $s<t\le\min(\lambda_1,\dots,\lambda_n)$. Then $P(t)\le Q(t)$ and $P(s)-P(t)\le Q(s)-Q(t)$. If $n\ge3$ and $P(s)-P(t)= Q(s)-Q(t)$ then $(b_{11},\dots,b_{nn})$ is a permutation of $(\lambda_1,\dots,\lambda_n)$.
\end{lemma}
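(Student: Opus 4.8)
The plan is to reduce the entire statement to a single majorization, $(\lambda_1,\dots,\lambda_n)\succ(b_{11},\dots,b_{nn})$, and then read off the two inequalities by applying Lemma \ref{lem2} to the top two elementary symmetric functions $e_n$ and $e_{n-1}$. First I would record the structural fact that makes this work. Since $b_{ii}=(V^*\Lambda V)_{ii}=\sum_k\lambda_k|v_{ki}|^2$, the matrix $S=(s_{ki})$ with $s_{ki}=|v_{ki}|^2$ is doubly stochastic: its column sums $\sum_k|v_{ki}|^2=1$ come from the columns of $V$ being unit vectors, and its row sums $\sum_i|v_{ki}|^2=1$ from the rows being unit vectors. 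As row vectors, $(b_{11},\dots,b_{nn})=(\lambda_1,\dots,\lambda_n)S$, so Lemma \ref{lem0} gives $(\lambda_1,\dots,\lambda_n)\succ(b_{11},\dots,b_{nn})$. Writing $\lambda=(\lambda_1,\dots,\lambda_n)$, $b=(b_{11},\dots,b_{nn})$ and $\mathbf{1}=(1,\dots,1)$, I note that subtracting a constant from every coordinate preserves majorization, so $\lambda-u\mathbf{1}\succ b-u\mathbf{1}$ for every real $u$; moreover each $b_{ii}$ is a convex combination of the $\lambda_k$, whence $\min_i b_{ii}\ge\min_k\lambda_k\ge t$, so for $u\le\min_k\lambda_k$ both shifted vectors lie in $[0,\infty)^n$.

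The first inequality is then immediate: $P(t)=\prod_i(\lambda_i-t)=e_n(\lambda-t\mathbf{1})$ and $Q(t)=e_n(b-t\mathbf{1})$, so Lemma \ref{lem2} with $k=n$ gives $P(t)\le Q(t)$. For the second inequality I would pass to derivatives. Differentiating gives $-P'(u)=\sum_j\prod_{i\ne j}(\lambda_i-u)=e_{n-1}(\lambda-u\mathbf{1})$ and likewise $-Q'(u)=e_{n-1}(b-u\mathbf{1})$. For $u\le\min_k\lambda_k$, Lemma \ref{lem2} with $k=n-1$ yields $-P'(u)\le -Q'(u)$. Integrating this pointwise bound over $u\in[s,t]$ (where it holds throughout, since $u\le t\le\min_k\lambda_k$) gives $P(s)-P(t)=\int_s^t(-P'(u))\,du\le\int_s^t(-Q'(u))\,du=Q(s)-Q(t)$.

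For the equality statement, the function $g(u)=e_{n-1}(b-u\mathbf{1})-e_{n-1}(\lambda-u\mathbf{1})$ is continuous, non-negative on $[s,t]$, and has integral zero there, so $g\equiv0$ on $[s,t]$. I would then select any $u\in[s,t)$, which exists because $s<t$. For such $u$ one has $u<t\le\min_k\lambda_k$, forcing every coordinate of both $\lambda-u\mathbf{1}$ and $b-u\mathbf{1}$ to be strictly positive, and $g(u)=0$ says exactly $e_{n-1}(\lambda-u\mathbf{1})=e_{n-1}(b-u\mathbf{1})$. Since $n\ge3$ makes $n-1\ge2>1$, the equality clause of Lemma \ref{lem2} applies and gives $(\lambda-u\mathbf{1})^\downarrow=(b-u\mathbf{1})^\downarrow$, hence $\lambda^\downarrow=b^\downarrow$; that is, $(b_{11},\dots,b_{nn})$ is a permutation of $(\lambda_1,\dots,\lambda_n)$.

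I expect the main obstacle to be securing the \emph{strict} positivity demanded by the equality clause of Lemma \ref{lem2}: the hypothesis only supplies $t\le\min_k\lambda_k$, which guarantees non-negativity but could fail strict positivity at the endpoint $u=t$. Choosing $u$ strictly inside $[s,t)$ circumvents this, and the gap $s<t$ is precisely what makes such a $u$ available. A secondary point deserving care is the doubly stochastic claim, which genuinely requires both the row and column unit-vector hypotheses on $V$; either one alone would yield only row- or column-stochasticity of $S$ and would not deliver the majorization.
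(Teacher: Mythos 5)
Your proof is correct, and although it opens exactly as the paper's does --- both establish that $S=(|v_{ij}|^2)$ is doubly stochastic, deduce the majorization $(\lambda_1-u,\dots,\lambda_n-u)\succ(b_{11}-u,\dots,b_{nn}-u)$ from Lemma \ref{lem0}, and obtain $P(t)\le Q(t)$ from Lemma \ref{lem2} with $k=n$ --- it genuinely diverges afterwards. For the difference inequality the paper stays purely algebraic: it expands $P(s)-P(t)=\sum_{k=0}^{n-1}e_k(\lambda_1-t,\dots,\lambda_n-t)(t-s)^{n-k}$ and bounds \emph{every} elementary symmetric function at the single shift $t$, whereas you use calculus, bounding only $e_{n-1}$ but at every shift $u\in[s,t]$ via $-P'(u)=e_{n-1}(\lambda-u\mathbf{1})\le e_{n-1}(b-u\mathbf{1})=-Q'(u)$ and then integrating; both routes are valid, with the paper's avoiding derivatives and yours isolating a single symmetric function. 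The more substantive difference is in the equality case. The paper's chain of equalities forces $e_2(\lambda_1-t,\dots,\lambda_n-t)=e_2(b_{11}-t,\dots,b_{nn}-t)$, but since $t$ may equal $\min_i\lambda_i$ the shifted vectors can have zero coordinates, so the $(0,\infty)^n$ equality clause of Lemma \ref{lem2} is not applicable there, and the paper must import an extra fact --- strict Schur-concavity of $e_2$ on all of $\mathbb{R}^n$, cited from Marshall--Olkin--Arnold --- to conclude. Your interior-point device (showing $g\equiv 0$ on $[s,t]$ by continuity and vanishing integral, then evaluating at some $u<t$, where strict positivity of both shifted vectors is automatic because $\lambda_i-u\ge t-u>0$ and $b_{ii}\ge\min_k\lambda_k\ge t>u$) sidesteps this entirely and keeps the whole argument inside the three lemmas quoted in the paper's preliminaries, which is a tidy self-containment the published proof lacks. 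Both arguments correctly locate where $n\ge3$ enters: you need $n-1\ge2$ for the equality clause applied to $e_{n-1}$, while the paper needs $2\le n-1$ so that $e_2$ actually occurs in its expansion.
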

\begin{proof} 
The conditions on $V$ show that the matrix $S=(|v_{ij}|^2)$ is doubly stochastic and a calculation shows that $(b_{11}-t,\dots,b_{nn}-t)=(\lambda_1-t,\dots,\lambda _n-t)S$.
By Lemma \ref{lem0} and Lemma \ref{lem2}, 
$$
e_k(\lambda_1-t,\dots,\lambda _n-t)\le e_k(b_{11}-t,\dots,b_{nn}-t)
$$
for $k=0,\dots,n$. In particular,
$$
P(t)=e_n(\lambda_1-t,\dots,\lambda _n-t)\le e_n(b_{11}-t,\dots,b_{nn}-t)=Q(t).
$$
Also,
\begin{align*}
P(s)-P(t)&=\prod_{i=1}^n(\lambda_i-t+t-s)-\prod_{i=1}^n(\lambda_i-t)\\
&=\sum_{k=0}^{n-1}e_k(\lambda_1-t,\dots,\lambda _n-t)(t-s)^{n-k}\\
&\le\sum_{k=0}^{n-1}e_k(b_{11}-t,\dots,b_{nn}-t)(t-s)^{n-k} \\
&=\prod_{i=1}^n(b_{ii}-t+t-s)- \prod_{i=1}^n(b_{ii}-t)
=Q(s)-Q(t).
\end{align*}
If $n\ge3$ and $P(s)-P(t)= Q(s)-Q(t)$, then the above estimate reduces to equality throughout, which implies $e_2(\lambda_1-t,\dots,\lambda _n-t)=e_2(b_{11}-t,\dots,b_{nn}-t)$. But $e_2$ is strictly Schur concave on all of $\Bbb R^n$. (See \cite[A.4]{MOA11} to prove concavity and then \cite[A.3.a]{MOA11} to prove strict concavity.) Thus $(b_{11}-t,\dots,b_{nn}-t)$ is a permutation of $(\lambda_1-t,\dots,\lambda_n-t)$ and therefore $(b_{11},\dots,b_{nn})$ is a permutation of $(\lambda_1,\dots,\lambda_n)$.
\end{proof}

\begin{thm} If $\lambda_1,\dots,\lambda_n$ are non-negative, then
\begin{equation}\label{H+}
\prod_{i=1}^n\lambda_i\le\prod_{i=1}^n b_{ii}.
\end{equation}
If $n\ge3$ and $\lambda_1,\dots,\lambda_n$ are strictly positive and distinct then equality holds if and only if $V$ has exactly one non-zero entry in each row and column.
\end{thm}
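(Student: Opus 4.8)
The plan is to derive the inequality (\ref{H+}) as the limiting case $t\to0$ of the inequality $P(t)\le Q(t)$ already established in Lemma \ref{2.L}. Setting $t=0$ gives $P(0)=\prod_{i=1}^n\lambda_i$ and $Q(0)=\prod_{i=1}^n b_{ii}$, so (\ref{H+}) is immediate from the lemma once we note that $t=0$ is an admissible choice under the hypothesis that all $\lambda_i$ are non-negative (so $0\le\min(\lambda_1,\dots,\lambda_n)$). Thus the inequality itself requires essentially no new work beyond citing Lemma \ref{2.L} at $t=0$.

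The real content is the equality characterization, and here I would split into the two implications. For the easy direction, suppose $V$ has exactly one non-zero entry in each row and column; since the rows and columns are unit vectors, that entry has modulus $1$, so $V$ is (up to phases) a permutation matrix. Then a direct computation shows $B=V^*\Lambda V$ has diagonal entries that are a permutation of $\lambda_1,\dots,\lambda_n$, forcing $\prod b_{ii}=\prod\lambda_i$. For the hard direction, assume equality in (\ref{H+}) with the $\lambda_i$ strictly positive and distinct and $n\ge3$. First I would use the strict positivity of the $\lambda_i$ to pick some $s<t$ with $0<t\le\min_i\lambda_i$ and transfer the equality $P(0)=Q(0)$ into the equality $P(s)-P(t)=Q(s)-Q(t)$ of the second part of Lemma \ref{2.L}; the cleanest route is to observe that $P-Q$ is a polynomial that is $\le0$ on a neighbourhood (by the first inequality applied for a range of shifts) and vanishes at a point, then invoke the lemma's equality clause to conclude $(b_{11},\dots,b_{nn})$ is a permutation of $(\lambda_1,\dots,\lambda_n)$. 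Once the diagonal of $B$ is a permutation of the $\lambda_i$, I would feed this back into the doubly stochastic relation $(b_{11},\dots,b_{nn})=(\lambda_1,\dots,\lambda_n)S$ with $S=(|v_{ij}|^2)$.

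The decisive step is then showing that the doubly stochastic matrix $S$ must be a permutation matrix. The key is that the vector $(\lambda_1,\dots,\lambda_n)$ has distinct entries, so the equation $(\lambda_{\tau(1)},\dots,\lambda_{\tau(n)})=(\lambda_1,\dots,\lambda_n)S$ for some permutation $\tau$ says that $S$ maps a vector of distinct values to a rearrangement of itself. Because distinct values cannot be reproduced by a genuine averaging (a nontrivial convex combination of distinct numbers lies strictly between the extremes), each entry $\lambda_{\tau(j)}$ must equal some single $\lambda_i$ with coefficient $1$; concretely, if column $j$ of $S$ had two positive entries, the corresponding $b_{jj}$ would be a nontrivial average of distinct $\lambda$'s and could not itself equal one of the distinct $\lambda$ values. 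Hence each column, and by double stochasticity each row, of $S=(|v_{ij}|^2)$ has a single nonzero entry equal to $1$, which is exactly the statement that $V$ has one nonzero entry per row and column.

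I expect the main obstacle to be the bookkeeping in the hard direction: cleanly promoting the single equality $P(0)=Q(0)$ to the hypothesis $P(s)-P(t)=Q(s)-Q(t)$ that Lemma \ref{2.L} actually requires (one must choose $s,t$ in the admissible range and argue the intermediate inequalities collapse), and then rigorously arguing that distinctness of the $\lambda_i$ upgrades ``$S$ is doubly stochastic with $(\lambda)S$ a permutation of $(\lambda)$'' to ``$S$ is a permutation matrix.'' The second of these is where the distinctness hypothesis is genuinely used, and it is the only place where strict inequality of the convex-combination estimate enters; everything else is a direct specialization of the lemma or a routine computation with $V^*\Lambda V$.
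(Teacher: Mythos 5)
Most of your argument is sound, and where it departs from the paper it still works. The inequality at $t=0$ is exactly the paper's first line. Your transfer of the single equality $P(0)=Q(0)$ into the hypothesis of the equality clause of Lemma \ref{2.L} is valid: taking $s=0$ and $t=\min_i\lambda_i>0$, the lemma gives $P(0)-P(t)\le Q(0)-Q(t)$, so $P(0)=Q(0)$ forces $Q(t)\le P(t)$, while the first part of the lemma gives $P(t)\le Q(t)$; hence $P(t)=Q(t)$ and $P(0)-P(t)=Q(0)-Q(t)$, and the clause applies. (The paper instead gets the same conclusion more directly from Lemma \ref{lem2} with $k=n$: equality in (\ref{H+}) means $e_n(\lambda_1,\dots,\lambda_n)=e_n(b_{11},\dots,b_{nn})$, the $b_{ii}$ are then strictly positive, and the strict-positivity equality case yields that $(b_{11},\dots,b_{nn})$ is a permutation of $(\lambda_1,\dots,\lambda_n)$. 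Both routes are legitimate.)

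The genuine gap is in your decisive step, where you argue that $S$ must be a permutation matrix. Your justification rests on the claim that a nontrivial convex combination of distinct numbers ``could not itself equal one of the distinct $\lambda$ values.'' That is false: with $\lambda=(1,2,3)$, one has $\tfrac12\lambda_1+\tfrac12\lambda_3=\lambda_2$, so a column of $S$ with two positive entries can perfectly well produce a diagonal entry of $B$ equal to some $\lambda_i$. A column-by-column argument therefore cannot work; the conclusion is only saved by a global constraint coupling all rows and columns of $S$. One correct repair is an extremal induction: the entry of the permuted diagonal equal to $\max_i\lambda_i$ can only arise from a column of $S$ that is a standard basis vector (since the maximum is strict among distinct values), the row-sum condition then zeroes out the rest of the corresponding row, and one deletes that row and column and inducts on the remaining distinct values. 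The paper achieves the same end differently: it sorts, writing $(\lambda_1^\downarrow,\dots,\lambda_n^\downarrow)=(\lambda_1^\downarrow,\dots,\lambda_n^\downarrow)R'SR$ for permutation matrices $R,R'$, shows by a largest-index argument that $R'SR$ must be upper triangular, and observes that the only upper triangular doubly stochastic matrix is the identity. Either way, the distinctness hypothesis is exploited through all $n$ equations and double stochasticity simultaneously, not through the single-entry averaging argument you propose.
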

\begin{proof} Lemma \ref{2.L} shows that $P(0)\le Q(0)$, which is (\ref{H+}).  Note that $V$ has exactly one non-zero entry in each row and column if and only if $S=(|v_{ij}|^2)$ is a permutation matrix. In that case, since $(b_{11},\dots,b_{nn})=(\lambda_1,\dots,\lambda _n)S$, (\ref{H+}) holds with equality.

Now suppose $\lambda_1,\dots,\lambda_n$ are strictly positive and distinct. If equality holds in (\ref{H+}), that is, if $e_n(\lambda_1,\dots,\lambda _n)=e_n(b_{11},\dots,b_{nn})$, then $b_{11},\dots,b_{nn}$ are also strictly positive. Therefore Lemma \ref{lem2} shows $(b_{11},\dots,b_{nn})$ is a permutation of $(\lambda_1,\dots,\lambda _n)$. It follows that there are permutation matrices $R$ and $R'$ such that $(\lambda_1^\downarrow,\dots,\lambda _n^\downarrow)=(\lambda_1^\downarrow,\dots,\lambda _n^\downarrow)R'SR$. Clearly, $R'SR=(t_{ij})$ is also a doubly stochastic matrix. 

Assume that for some $i$, there exists an $m<i$ such that $t_{im}\ne0$. For this $i$ choose the largest such $m$. Then 
$t_{ij}(\lambda_j^\downarrow-\lambda_i^\downarrow)=0$ for $j>m$, $t_{ij}(\lambda_j^\downarrow-\lambda_i^\downarrow)\ge0$ for $j<m$, and $t_{im}(\lambda_m^\downarrow-\lambda_i^\downarrow)>0$. This shows that
$\sum_{j=1}^nt_{ij}(\lambda_j^\downarrow-\lambda_i^\downarrow)>0$, which contradicts $(\lambda_1^\downarrow,\dots,\lambda _n^\downarrow)=(\lambda_1^\downarrow,\dots,\lambda _n^\downarrow)R'SR$. We conclude that $R'PR$ is upper triangular. It is easy to see that the only upper triangular, doubly stochastic matrix is $I$. Thus, $S$ is a permutation matrix. 
\end{proof}

The following examples show that the conditions for equality may change in the non-generic cases where the $\lambda_1,\dots,\lambda_n$ are not distinct or include one or more zeros.

\begin{eg} Let $\lambda_1=\lambda_2=1$ and $V=\left(\begin{smallmatrix}i\cos\theta&i\sin\theta&0\\\sin\theta&\cos\theta&0\\0&0&-1\end{smallmatrix}\right)$ for some $\theta$. Then $B=V^*V=\left(\begin{smallmatrix}1&\sin2\theta&0\\\sin2\theta&1&0\\0&0&1\end{smallmatrix}\right)$, but $VV^*=\left(\begin{smallmatrix}1&i\sin2\theta&0\\-i\sin2\theta&1&0\\0&0&1\end{smallmatrix}\right)$. So we have equality in (\ref{H+}) but $V$ does not have only one non-zero entry in each row and column. Significantly, the doubly stochastic matrix $S=\left(\begin{smallmatrix}\cos^2\theta&\sin^2\theta&0\\\sin^2\theta&\cos^2\theta&0\\0&0&1\end{smallmatrix}\right)$ is not uniquely determined; it varies with $\theta$.
\end{eg}

\begin{eg} With $\Lambda$ and $V$ as defined above, let $\Lambda'=\left(\begin{smallmatrix}\Lambda&0\\0&0\end{smallmatrix}\right)$ and $V'=\left(\begin{smallmatrix}V&0\\0&1\end{smallmatrix}\right)$. Then we have equality in (\ref{H+}) because both sides are zero, but $V'$ does not have one non-zero entry in each row and column unless $V$ does. 
\end{eg}

The conditions imposed on $V$, above, are satisfied by any unitary matrix but the unitaries are only a small subclass of the possible matrices $V$. The next example gives large class of matrices $V$ that are not unitary.

\begin{eg}
Suppose $T=(t_{ij})$  is an $n\times n$ Hermitian matrix with spectral norm $\|T\|\le 1$ satisfying $t_{ii}=0$ for all $i$. Since $\|T\|\le1$, $I+T$ is positive semi-definite and therefore has a positive semi-definite square root $V=(I+T)^{1/2}$. Note that $V^*V=VV^*=I+T$, a matrix with ones on the diagonal. If $T$ is not zero the matrix $V$ is not unitary.
\end{eg}

On the other hand, if $V$ is unitary, the conditions on $V$ are satisfied automatically, the product $\lambda_1\cdots\lambda_n$ is the determinant of $B$, and $B$ could be any positive semi-definite matrix. 

Let $S_n$ be the group of permutations of $\{1,\dots,n\}$ and let $D_n$ denote the
collection of permutations that have no fixed point. These are the so-called
derangements of $\{1,\dots,n\}$. We also let $e_i$ be the $i$th column of the $n\times n$ identity matrix so that $Ae_i$ extracts the $i$th column of $A$. We remind the reader not to confuse $e_i$ with $e_i(x)$.

 \begin{thm}\label{refined} Let $n\ge 3$. If $A=(a_{ij})$ is positive semi-definite and $\tau\in D_n$, then
 	\begin{equation}\label{maxtau}
 	\det(A)+\prod_{i=1}^n|a_{i,\tau(i)}|\le \prod_{i=1}^na_{ii}.
 	\end{equation}
 	Equality holds if and only if $A$ is diagonal or the two vectors $Ae_i$ and $Ae_{\tau(i)}$ are collinear for each $i$.
 \end{thm}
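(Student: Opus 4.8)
The plan is to normalize, reduce a general derangement to a single cycle, and only then bring in the majorization machinery of Lemma \ref{2.L}.

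Since $\tau$ has no fixed point, every $a_{i,\tau(i)}$ is an off-diagonal entry. If some $a_{ii}=0$ then, by positive semi-definiteness, the whole $i$th row and column vanish, so both sides of \eqref{maxtau} are $0$ and $Ae_i=0$ is collinear with $Ae_{\tau(i)}$; this case is immediate. So assume $a_{ii}>0$ for all $i$ and replace $A$ by $D^{-1/2}AD^{-1/2}$, $D=\operatorname{diag}(a_{11},\dots,a_{nn})$. This congruence preserves positive semi-definiteness, divides $\det A$ by $\prod_i a_{ii}$, and—since $\tau$ is a permutation—divides $\prod_i|a_{i,\tau(i)}|$ by $\prod_i a_{ii}$ as well, so it suffices to prove \eqref{maxtau} for a correlation matrix, where it reads $\det A+\prod_i|a_{i,\tau(i)}|\le 1$. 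Next write $\tau$ as a product of disjoint cycles supported on a partition $G_1,\dots,G_m$ of $\{1,\dots,n\}$ and set $D_l=\det A[G_l]$, $C_l=\prod_{i\in G_l}|a_{i,\tau(i)}|$, $P_l=\prod_{i\in G_l}a_{ii}$. Each $A[G_l]$ is positive semi-definite; iterating the Fischer inequality (available because Hermitian positive semi-definite matrices are $F$-matrices) gives $\det A\le\prod_l D_l$, while $\prod_i|a_{i,\tau(i)}|=\prod_l C_l$. Granting the single-cycle bound $D_l+C_l\le P_l$ for each block, the superadditivity $\prod_l D_l+\prod_l C_l\le\prod_l(D_l+C_l)$ (valid for non-negative reals, the right product expanding into $\prod_l D_l+\prod_l C_l$ plus non-negative cross terms) yields
\begin{equation*}
\det A+\prod_i|a_{i,\tau(i)}|\le\prod_l D_l+\prod_l C_l\le\prod_l(D_l+C_l)\le\prod_l P_l=\prod_i a_{ii}.
\end{equation*}
Everything thus reduces to $\tau$ a single $n$-cycle, which after relabelling I take to be $\tau=(1\,2\,\cdots\,n)$; the goal becomes $\det A\le 1-\prod_{i=1}^n|a_{i,i+1}|$ (indices mod $n$) for a correlation matrix $A$.

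The single cycle is the heart of the matter, and the plan is to feed it into \eqref{H+}. A unitary $V$ there forces $B=A$ and collapses the conclusion to ordinary Hadamard, which cannot see the off-diagonal cyclic product; the refinement must therefore come from a genuinely non-unitary $V$ (as in the Examples), which is exactly what the sharper estimate $P(s)-P(t)\le Q(s)-Q(t)$ of Lemma \ref{2.L} is designed to exploit. Concretely, I would construct a non-negative diagonal $\Lambda$ and a matrix $V$ with unit rows and columns for which $B=V^*\Lambda V$ has the same diagonal as $A$ while $\prod_i\lambda_i=\det A+\prod_i|a_{i,i+1}|$; equivalently, build from $A$ a positive semi-definite comparison matrix with the same diagonal whose determinant exceeds $\det A$ by exactly the cyclic product, and then apply \eqref{H+}. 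Producing this $V,\Lambda$—using the cyclic structure, and crucially the positive semi-definiteness of $A$ to guarantee $\Lambda\ge 0$ together with the unit-row/column normalization—is the step I expect to be the main obstacle. A self-contained fallback avoids the construction: writing $A$ as the Gram matrix of unit vectors $c_1,\dots,c_n$ and estimating $\det A$ by Gram--Schmidt along the cycle reduces the single-cycle bound to an elementary inequality for the numbers $|a_{i,i+1}|\in[0,1]$; this route bypasses majorization but settles the inequality.

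For the equality statement I would trace equality back through the chain. Equality forces equality in Fischer, so $A$ is block diagonal along the cycles, and equality in both the superadditivity and the single-cycle steps. Within a single cycle, equality in \eqref{H+} should match its generic equality condition—the doubly stochastic matrix $S=(|v_{ij}|^2)$ being a permutation—which I expect to translate into collinearity of the columns $Ae_i$ and $Ae_{\tau(i)}$; the alternative, a vanishing cyclic product, forces $A$ diagonal. The non-generic possibilities (repeated or zero $\lambda_i$, vanishing $a_{ii}$) must be treated separately, in the spirit of the Examples. This bookkeeping for the \emph{only if} direction is where I anticipate the most care is needed, after the single-cycle construction itself.
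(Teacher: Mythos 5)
Your plan has a genuine gap, and it sits exactly where you predicted: the single-cycle case. Your primary route rests on a misreading of the paper's machinery. You claim that a unitary $V$ "collapses the conclusion to ordinary Hadamard, which cannot see the off-diagonal cyclic product," so the refinement must come from a non-unitary $V$. In fact the paper's proof takes $V$ unitary (so $B=A$) and extracts the refinement from the \emph{second} conclusion of Lemma \ref{2.L}: with $t=\min(\lambda_1,\dots,\lambda_n)$ one has $P(t)=0$, so $P(0)-P(t)\le Q(0)-Q(t)$ reads $\det(A)\le\prod_i a_{ii}-Q(t)$, and the Cauchy--Schwarz estimate $|a_{i,\tau(i)}|=\big|\sum_j\bar v_{ji}(\lambda_j-t)v_{j,\tau(i)}\big|\le\sqrt{(a_{ii}-t)(a_{\tau(i),\tau(i)}-t)}$ --- where the shift by $t$ is legitimate precisely because $\tau(i)\ne i$ makes columns $i$ and $\tau(i)$ of the unitary $V$ orthogonal --- yields $Q(t)\ge\prod_i|a_{i,\tau(i)}|$ for \emph{any} derangement at once, with no cycle decomposition and no normalization. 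Your proposed construction of a comparison matrix with diagonal $(a_{11},\dots,a_{nn})$ and determinant exactly $\det A+\prod_i|a_{i,i+1}|$ is moreover circular as stated: to know such a positive semi-definite matrix can exist you already need $\det A+\prod_i|a_{i,i+1}|\le\prod_i a_{ii}$, which is the inequality being proved. You flag this as the main obstacle; it is, and it is not overcome.

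Your reduction and fallback do salvage the \emph{inequality}: the Fischer-plus-superadditivity step $\prod_l D_l+\prod_l C_l\le\prod_l(D_l+C_l)\le\prod_l P_l$ is sound (and $2$-cycles give $D_l+C_l=P_l$ exactly, since $a_{ji}=\bar a_{ij}$), while the Gram--Schmidt bound $\det A\le\prod_{i=2}^n(1-|a_{i-1,i}|^2)$ reduces matters, after discarding $|a_{n,1}|\le1$, to $\prod_{i=1}^{m}(1-x_i^2)\le 1-\prod_{i=1}^{m}x_i$ for $x_i\in[0,1]$, $m\ge2$, which holds because $\prod_i(1-x_i^2)\le 1-\max_i x_i^2$ and $\prod_i x_i\le(\max_i x_i)^m\le\max_i x_i^2$. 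But the theorem is an if-and-only-if, and your equality analysis is a sketch containing a false step: ``equality forces equality in Fischer, so $A$ is block diagonal along the cycles'' fails for merely semi-definite $A$ --- if some $\det A[G_l]=0$, Fischer holds with equality with no block structure at all, and the singular case is exactly where the collinearity alternative of the theorem lives. (The paper invokes Fischer-equality-implies-block-diagonal only in the positive definite setting of its Corollary.) The paper gets both directions of the equality statement directly from its single argument: for $t>0$, equality in Lemma \ref{2.L} forces the diagonal to be a permutation of the eigenvalues, whence $A$ is diagonal by the Hadamard equality case; for $t=0$, equality in Cauchy--Schwarz translates, through $AV^*e_j=\lambda_jV^*e_j$ and invertibility of $V$, into $aAe_i=bAe_{\tau(i)}$. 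As it stands, your proposal proves the inequality by a genuinely different and more elementary route, but proves neither direction of the equality characterization.
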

\begin{proof} Choose a unitary $V$ such that $A=V^*\Lambda V$,  where $\lambda_1,\dots,\lambda_n$ are the (necessarily non-negative) eigenvalues of $A$. This makes $B=A$. Then set $t=\min(\lambda_1,\dots,\lambda_n)$. Lemma \ref{2.L} shows that $P(0)-P(t)\le Q(0)-Q(t)$. The choice of $t$ ensures that $P(t)=0$. Also $P(0)=\det(A)$ and $Q(0)=\prod_{i=1}^na_{ii}$. To prove (\ref{maxtau}) we use the Cauchy-Schwarz inequality to show that $Q(t)\ge\prod_{i=1}^n|a_{i,\tau(i)}|$ for each $\tau\in D_n$. Fix $\tau\in D_n$ and observe that
\begin{equation}\label{CS}\begin{aligned}
|a_{i,\tau(i)}|^2&=\bigg|\sum_{j=1}^n\bar v_{ji}\lambda_jv_{j,\tau(i)}\bigg|^2\\
&=\bigg|\sum_{j=1}^n\bar v_{ji}(\lambda_j-t)v_{j,\tau(i)}\bigg|^2\\
&\le\sum_{j=1}^n|v_{ji}|^2(\lambda_j-t)\sum_{j=1}^n|v_{j,\tau(i)}|^2(\lambda_j-t)\\
&=(a_{ii}-t)(a_{\tau(i),\tau(i)}-t).
\end{aligned}\end{equation}
Thus,
\begin{equation}\label{aCS}
\prod_{i=1}^n|a_{i,\tau(i)}|
\le\bigg(\prod_{i=1}^n(a_{ii}-t)\prod_{i=1}^n(a_{\tau(i),\tau(i)}-t)\bigg)^{1/2}=\prod_{i=1}^n(a_{ii}-t)=Q(t).
\end{equation}
Next we consider conditions for equality. If $A$ is diagonal it is clear that (\ref{maxtau}) holds with equality. Suppose   the two vectors $Ae_i$ and $Ae_{\tau(i)}$ are collinear for each $i$. Then $A$ is singular, $\det(A)=0$, and $t=0$. Fix $i$ and choose $a$ and $b$, not both zero, such that $aAe_i=bAe_{\tau(i)}$. Then for each $j$, $e_j^*VA(ae_i-be_{\tau(i)})=0$. But $AV^*e_j=V^*\Lambda e_j=\lambda_jV^*e_j$ so $\lambda_je_j^*V(ae_i-be_{\tau(i)})=0$. Therefore $av_{ji}=bv_{j,\tau(i)}$ for all $j$ such that $\lambda_j\ne0$. This gives equality in (\ref{CS}). Since this holds for all $i$, we have equality in (\ref{aCS}) as well. Since $\det(A)=0$ we also have equality in (\ref{maxtau}).

Conversely,  suppose equality holds in (\ref{maxtau}). The above proof shows that we must have $P(0)-P(t)= Q(0)-Q(t)$ and equality in (\ref{CS}) for all $i$. If $t>0$, Lemma \ref{2.L} shows that $(a_{11},\dots,a_{nn})$ is a permutation of $(\lambda_1,\dots,\lambda_n)$ giving equality in Hadamard's inequality. Therefore $A$ is diagonal. If $t=0$ then equality in (\ref{CS}) implies that for all $i$ there exist constants $a$ and $b$, not both zero, such that $av_{ji}=bv_{j,\tau(i)}$ for all $j$ such that $\lambda_j\ne 0$. Thus, for all $j$, $\lambda_je_j^*V(ae_i-be_{\tau(i)})=0$ and hence, as above, $e_j^*VA(ae_i-be_{\tau(i)})=0$. This holds for all $j$, and $V$ is invertible, so we conclude that $aAe_i=bAe_{\tau(i)}$, that is, $Ae_i$ and $Ae_{\tau(i)}$ are collinear.
\end{proof}

\begin{rem} The collinearity condition for equality above may be expressed in terms of matrix rank: For $J\subseteq \{1,\dots,n\}$, let $P_J$ be the orthogonal projection onto $\operatorname{span}\{e_j:j\in J\}$. Let $J^\tau_1,\dots,J^\tau_{n_\tau}$ be the orbits of $\tau\in D_n$. Then $I=\sum_{k=1}^{n_\tau}P_{J_k}$ so $A=\sum_{k=1}^{n_\tau}AP_{J_k}$. The condition that the two vectors $Ae_i$ and $Ae_{\tau(i)}$ are collinear for each $i$ is equivalent to saying that the rank of $AP_{J_k}$ is at most 1 for $k=1,\dots,n_\tau$.
\end{rem}

The next result follows from Lemma \ref{2.L} and the Cauchy-Schwarz estimates of Theorem \ref{refined}. We state it without proof.

\begin{thm} Let $\tau\in D_n$. If for all $i$, the $(i,\tau(i))$ entry of $V^*V$ is zero then
$$
\prod_{i=1}^n\lambda_i+\prod_{i=1}^n|b_{i,\tau(i)}|\le\prod_{i=1}^n b_{ii}.
$$
\end{thm}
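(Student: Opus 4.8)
The plan is to mirror the argument of Theorem~\ref{refined}, the key difference being that here $V$ need not be unitary, so I cannot identify $\prod_i\lambda_i$ with $\det(B)$ and must instead lean on Lemma~\ref{2.L} throughout. As there, I would set $t=\min(\lambda_1,\dots,\lambda_n)$ and apply Lemma~\ref{2.L} with $s=0$. Since one factor of $P(t)=\prod_i(\lambda_i-t)$ vanishes, we have $P(t)=0$, and the inequality $P(0)-P(t)\le Q(0)-Q(t)$ from Lemma~\ref{2.L} gives
\[
\prod_{i=1}^n\lambda_i=P(0)\le Q(0)-Q(t)=\prod_{i=1}^nb_{ii}-\prod_{i=1}^n(b_{ii}-t).
\]
Equivalently, $\prod_i\lambda_i+Q(t)\le\prod_ib_{ii}$, so it remains only to show that the product $Q(t)=\prod_i(b_{ii}-t)$ dominates $\prod_i|b_{i,\tau(i)}|$.

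For this I would reproduce the Cauchy--Schwarz estimate (\ref{CS}), now exploiting the hypothesis on $V^*V$ in place of the derangement property that was used in the unitary case. Writing $b_{i,\tau(i)}=\sum_j\bar v_{ji}\lambda_jv_{j,\tau(i)}$ and using that the $(i,\tau(i))$ entry of $V^*V$ is $\sum_j\bar v_{ji}v_{j,\tau(i)}=0$, I can subtract $t$ times this vanishing sum to obtain $b_{i,\tau(i)}=\sum_j\bar v_{ji}(\lambda_j-t)v_{j,\tau(i)}$. Because $\lambda_j-t\ge0$ for every $j$, Cauchy--Schwarz with these non-negative weights yields
\[
|b_{i,\tau(i)}|^2\le\Big(\sum_j|v_{ji}|^2(\lambda_j-t)\Big)\Big(\sum_j|v_{j,\tau(i)}|^2(\lambda_j-t)\Big)=(b_{ii}-t)(b_{\tau(i),\tau(i)}-t),
\]
where the final equality uses that the columns of $V$ are unit vectors, so that $\sum_j|v_{jk}|^2(\lambda_j-t)=(V^*\Lambda V)_{kk}-t(V^*V)_{kk}=b_{kk}-t$ for $k=i$ and $k=\tau(i)$.

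Taking the product over $i$ and noting that $\tau$ permutes the indices, so that $\prod_i(b_{\tau(i),\tau(i)}-t)=\prod_i(b_{ii}-t)=Q(t)$, gives $\prod_i|b_{i,\tau(i)}|^2\le Q(t)^2$. Since each factor $b_{ii}-t=\sum_j|v_{ji}|^2(\lambda_j-t)\ge0$, we have $Q(t)\ge0$ and may take square roots to conclude $\prod_i|b_{i,\tau(i)}|\le Q(t)$. Combining this with the consequence of Lemma~\ref{2.L} displayed above then finishes the proof. The step I would flag as the crux is the observation that the hypothesis on $V^*V$ is exactly what permits the shift by $t$ that replaces each $\lambda_j$ by the non-negative quantity $\lambda_j-t$ inside the inner product; this is precisely the role played by the derangement condition in the unitary setting of Theorem~\ref{refined}, and without it Cauchy--Schwarz would not reproduce the diagonal quantities $b_{ii}-t$. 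A minor point to verify is the degenerate case $t=0$, where the bound from Lemma~\ref{2.L} reduces to $0\le0$ but the Cauchy--Schwarz estimate still delivers $\prod_i|b_{i,\tau(i)}|\le\prod_ib_{ii}$ directly.
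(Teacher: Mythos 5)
Your proof is correct and takes exactly the route the paper intends: the paper states this theorem without proof, noting only that it ``follows from Lemma~\ref{2.L} and the Cauchy--Schwarz estimates of Theorem~\ref{refined},'' which is precisely your argument --- apply Lemma~\ref{2.L} with $s=0$, $t=\min_i\lambda_i$ so that $P(t)=0$, and use the hypothesis $(V^*V)_{i,\tau(i)}=0$ to justify the shift $b_{i,\tau(i)}=\sum_j\bar v_{ji}(\lambda_j-t)v_{j,\tau(i)}$, playing the role that orthonormality of the columns and the derangement condition played in the unitary case. Your explicit handling of the degenerate case $t=0$ (excluded by the hypothesis $s<t$ of Lemma~\ref{2.L}) is a careful touch the paper leaves implicit.
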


Next is our proof of the motivating result: Theorem \ref{ZYmain} for positive semi-definite matrices, including conditions for equality.

Observe that if $n=2$, (\ref{maxtau}) reduces to equality for every $A$. 

 \begin{cor}\label{refinedcor} \cite{ZY97} Let  $A=(a_{ij})$ be positive semi-definite matrix, and $\sigma\in S_n$. If $\sigma$ is not the identity permutation, then
 	\begin{equation}\label{maxsigma}
 	\det(A)+\prod_{i=1}^n|a_{i,\sigma(i)}|\le \prod_{i=1}^na_{ii}.
 	\end{equation}
Equality holds if $A$ is diagonal, or if for each $i$ either: $\sigma(i)=i$ and $Ae_i$, $e_i$ are collinear; $\sigma(i)\ne i$ and $\sigma$ is a transposition; or $\sigma(i)\ne i$ and $Ae_i$, $Ae_{\sigma(i)}$ are collinear. If $A$ is positive definite, these conditions are also necessary for equality.
 \end{cor}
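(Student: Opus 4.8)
The plan is to reduce Corollary \ref{refinedcor} to Theorem \ref{refined} by separating the fixed points of $\sigma$ from the points it moves. Write $F=\{i:\sigma(i)=i\}$ and $M=\{1,\dots,n\}\setminus F$. Since $\sigma$ is not the identity, $M$ is nonempty, and the restriction $\sigma|_M$ is a derangement of $M$, so $|M|\ge2$. The observation that drives everything is that for a fixed point $i\in F$ we have $a_{i,\sigma(i)}=a_{ii}$, so the extra product in (\ref{maxsigma}) factors as $\prod_{i=1}^n|a_{i,\sigma(i)}|=\big(\prod_{i\in F}a_{ii}\big)\big(\prod_{i\in M}|a_{i,\sigma(i)}|\big)$.

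For the inequality I would chain three standard facts. Since $A$ is positive semi-definite it is an $F$-matrix, so Fischer's inequality gives $\det(A)\le\det(A[F])\det(A[M])$. Hadamard's inequality (\ref{h1}) gives $\det(A[F])\le\prod_{i\in F}a_{ii}$. Finally, applying Theorem \ref{refined} to the positive semi-definite matrix $A[M]$ with the derangement $\sigma|_M$ when $|M|\ge3$, or the observation that (\ref{maxtau}) is an equality when $|M|=2$, gives $\det(A[M])\le\prod_{i\in M}a_{ii}-\prod_{i\in M}|a_{i,\sigma(i)}|$. Multiplying the Hadamard and Theorem \ref{refined} estimates and inserting Fischer yields
$$\det(A)\le\Big(\prod_{i\in F}a_{ii}\Big)\Big(\prod_{i\in M}a_{ii}-\prod_{i\in M}|a_{i,\sigma(i)}|\Big)=\prod_{i=1}^na_{ii}-\prod_{i=1}^n|a_{i,\sigma(i)}|,$$
which is (\ref{maxsigma}). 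Each multiplication is legitimate because every factor is non-negative.

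For sufficiency of the equality conditions, the case $A$ diagonal is immediate (the determinant equals the diagonal product and $\prod_i|a_{i,\sigma(i)}|=0$ since $\sigma\ne\mathrm{id}$). Under the ``for each $i$'' hypothesis, every fixed point must satisfy the first clause, since the other two require $\sigma(i)\ne i$; hence $Ae_i=a_{ii}e_i$ for $i\in F$. This decouples each fixed index from the rest, making Fischer and Hadamard equalities and reducing (\ref{maxsigma}), after cancelling $\prod_{i\in F}a_{ii}$, to the corresponding statement for $A[M]$. There, the transposition clause corresponds to $|M|=2$, where (\ref{maxtau}) is automatically an equality, while the collinearity clause is exactly the equality condition supplied by Theorem \ref{refined}; the decoupling guarantees that collinearity of $Ae_i,Ae_{\sigma(i)}$ in the full space is equivalent to collinearity of the corresponding columns of $A[M]$.

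The harder direction is necessity in the positive definite case, and this is where I expect the main obstacle. Assuming $A$ is positive definite and equality holds in (\ref{maxsigma}), every factor appearing in the chain above is strictly positive, so the equality $\det(A)=\prod_{i\in F}a_{ii}\big(\prod_{i\in M}a_{ii}-\prod_{i\in M}|a_{i,\sigma(i)}|\big)$ forces equality in each of Fischer, Hadamard, and Theorem \ref{refined}. The real work is converting these into the stated structure: Fischer equality for a positive definite matrix forces the off-diagonal block $A[F,M]$ to vanish, via the Schur-complement form of Fischer's inequality together with strict monotonicity of the determinant on positive definite matrices; Hadamard equality forces $A[F]$ to be diagonal; and together these give $Ae_i=a_{ii}e_i$ for $i\in F$, which is clause 1. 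The equality condition of Theorem \ref{refined} applied to the positive definite matrix $A[M]$ then splits on $|M|$: if $|M|=2$ the transposition clause holds with no further requirement, while if $|M|\ge3$ it yields either $A[M]$ diagonal---which, combined with the vanishing off-diagonal block and diagonal $A[F]$, makes $A$ diagonal---or collinearity of $A[M]e_i,A[M]e_{\sigma(i)}$, which lifts to clause 3. I expect the delicate points to be the careful justification of the equality case of Fischer's inequality and the need to respect the $n\ge3$ hypothesis of Theorem \ref{refined} by treating $|M|=2$ separately.
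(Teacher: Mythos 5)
Your proposal is correct and follows essentially the same route as the paper's own proof: the same split into fixed points $F$ and moved points $G=M$, the same chain of Fischer's inequality, Hadamard's inequality, and Theorem \ref{refined} applied to $A[M]$ (with the $|M|=2$ case handled by the automatic equality in (\ref{maxtau})), and the same equality analysis using positivity of $\det(A[F])$, $\det(A[M])$ and the diagonal factors to force equality in each link. The only real difference is cosmetic: you sketch the equality case of Fischer's inequality via Schur complements and strict monotonicity of the determinant, where the paper simply cites \cite[p.~217]{Zha11}.
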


\begin{proof} Let $F$ be the set of fixed points of $\sigma$, a proper subset of $\{1,\dots,n\}$, and let $G$ be its complement. If $F$ is empty, the result follows from Theorem \ref{refined}. Note that both $A[F]$ and $A[G]$ are positive semi-definite. Fischer's inequality (see \cite[Theorem 7.8.5]{HJ13}), followed by Hadamard's inequality, gives
\begin{equation}\label{F-H}
\det(A)\le\det(A[G])\det(A[F])
\le\det(A[G])\prod_{i\in F}a_{ii}.
\end{equation}
Note that the restriction of $\sigma$ to $G$  is a permutation of $G$ with no fixed point. By Theorem \ref{refined}, we have
\begin{equation}\label{refG}
\det(A[G])+\prod_{i\in G}|a_{i,\sigma(i)}|\le\prod_{i\in G}a_{ii},
\end{equation}
provided $G$ has at least three elements. We can remove that restriction, however, because (\ref{refG}) becomes equality when $G$ has exactly two elements, it is impossible for $G$ to have exactly one element, and we have excluded the case that $G$ is empty. 
Combining the last two inequalities gives (\ref{maxsigma}).

If $A$ is diagonal then we clearly have equality in (\ref{maxsigma}). Now suppose that for each $i$ either: $\sigma(i)=i$ and $Ae_i$, $e_i$ are collinear; or $\sigma(i)\ne i$ and, if $\sigma$ is not just a transposition, then $Ae_i$, $Ae_{\sigma(i)}$ are collinear. This implies that $A[F]$ is diagonal, and $A$ has (up to reordering of the standard basis) a block diagonal decomposition with blocks $A[F]$ and $A[G]$. Thus we have equality in (\ref{F-H}). The conditions for equality in Theorem \ref{refined} give equality in (\ref{refG}) when $n\ge3$ and equality is trivial when $n=2$ so we have equality in (\ref{maxsigma}).

Now suppose that $A$ is positive definite and
equality holds in (\ref{maxsigma}).
Since $\det(A)>0$, the Fischer inequality shows that $\det(A[F])>0$ and $\det(A[G])>0$. Therefore we have equality in both  (\ref{F-H}) and (\ref{refG}).

Equality in the Fischer inequality from (\ref{F-H})  implies that $A$ has (up to reordering of the standard basis) a block diagonal decomposition with blocks $A[F]$ and $A[G]$ (see \cite[p. 217]{Zha11}).  Equality in the Hadamard inequality from (\ref{F-H}) implies that $A[F]$ is diagonal. Together, these show that if $\sigma(i)=i$, then $Ae_i$ and $e_i$ are collinear. Equality in (\ref{refG}) implies, via Theorem \ref{refined}, that if $G$ has at least three elements and $\sigma(i)\ne i$, then $Ae_i$, $Ae_{\sigma(i)}$ are collinear. If $G$ has fewer than three elements then $\sigma$ can only be a transposition. This completes the proof.
\end{proof}

Recall that the Hadamard product ``$\circ$" is the entrywise product of matrices. So the Hadamard inequality may be written as $\det A\le \det( A \circ I)$ for a positive semi-definite $A$. Theorem \ref{refined} enables us to state the following result.
     \begin{thm} \label{t4} Let $\tau\in D_n$ be a derangement and $P=(p_{ij})$ where $p_{ij}$ is $1$ when $j=\tau(i)$ and zero otherwise. 
For a positive semi-definite matrix $A=(a_{ij})$,	\begin{eqnarray} 
             \det (A\circ I)\ge \det A+|\det (A\circ P)|.
            	\end{eqnarray}   Equality holds if and only if the matrix $A$ is a diagonal matrix or  the two vectors $Ae_i$ and $Ae_{\tau(i)}$ are collinear for each $i$.\end{thm}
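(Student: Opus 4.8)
The plan is to recognize that this theorem is essentially a reformulation of Theorem \ref{refined} in the language of Hadamard products, so that the entire proof reduces to interpreting the three quantities $\det(A\circ I)$, $\det A$, and $\det(A\circ P)$ correctly and then invoking the inequality (\ref{maxtau}). First I would observe that $A\circ I$ is simply the diagonal matrix whose $(i,i)$ entry is $a_{ii}$, so that $\det(A\circ I)=\prod_{i=1}^n a_{ii}$, which is the right-hand side of (\ref{maxtau}). The middle term $\det A$ is of course already the first term on the right of (\ref{maxtau}), so only $\det(A\circ P)$ remains to be identified.

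The one computation that requires care is the determinant of $A\circ P$. Since $P$ is the permutation matrix of the derangement $\tau$, the only non-zero entries of $A\circ P$ are the values $a_{i,\tau(i)}$, occurring in position $(i,\tau(i))$. Thus $A\circ P$ has exactly one non-zero entry in each row and, because $\tau$ is a permutation, exactly one in each column. Expanding by the Leibniz formula, the only term that survives is the one indexed by $\tau$ itself, so
$$
\det(A\circ P)=\operatorname{sgn}(\tau)\prod_{i=1}^n a_{i,\tau(i)},
$$
and hence $|\det(A\circ P)|=\prod_{i=1}^n|a_{i,\tau(i)}|$. With these two identities in hand, the asserted inequality $\det(A\circ I)\ge\det A+|\det(A\circ P)|$ becomes word-for-word the inequality (\ref{maxtau}), which applies because $A$ is positive semi-definite and $\tau\in D_n$.

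The equality statement then transfers directly: the conditions for equality in (\ref{maxtau}) recorded in Theorem \ref{refined} are precisely that $A$ be diagonal or that $Ae_i$ and $Ae_{\tau(i)}$ be collinear for each $i$, which is exactly the claim here. There is no serious obstacle; the entire content is carried by Theorem \ref{refined}, and the only genuine step is the elementary determinant evaluation of the generalized permutation matrix $A\circ P$. The one point I would flag is the range of $n$: Theorem \ref{refined} is stated for $n\ge 3$, and indeed the equality characterization needs $n\ge 3$, since for $n=2$ the only derangement is the transposition and one checks directly that both sides of the inequality equal $a_{11}a_{22}$, so equality always holds even though the collinearity condition need not. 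I would therefore carry the assumption $n\ge 3$ through from Theorem \ref{refined}.
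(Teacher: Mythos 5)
Your proof is correct and is precisely the argument the paper intends: Theorem \ref{t4} is presented as an immediate consequence of Theorem \ref{refined} (``Theorem \ref{refined} enables us to state the following result''), with the only real content being the identifications $\det(A\circ I)=\prod_{i=1}^n a_{ii}$ and $|\det(A\circ P)|=\prod_{i=1}^n|a_{i,\tau(i)}|$ that you carry out via the Leibniz expansion. Your flag about needing $n\ge 3$ for the equality characterization is a valid catch of an imprecision in the paper's statement, consistent with the paper's own remark, just before Corollary \ref{refinedcor}, that for $n=2$ the inequality (\ref{maxtau}) reduces to equality for every $A$.
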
 
            	                      
          \begin{rem}
         We expect that Theorem  \ref{t4} will stimulate further investigation of Oppenheim-Schur inequalities (see \cite[p. 509]{HJ13}).   \end{rem}
         
  In 1961, Thompson \cite{Tho61} published a remarkable  determinant inequality.

  \begin{thm}\label{Th}If $A=(A_{ij})$ is positive definite with each block $A_{ij}$ square, then 
  	\begin{eqnarray}\label{tho}
   \det A\le \det (\det A_{ij}).
  	\end{eqnarray} Equality holds if and only if $A$ is block diagonal.
  \end{thm}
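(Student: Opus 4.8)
The plan is to realize the scalar matrix $C=(\det A_{ij})$ as a compression of an $m$-th compound matrix and then read off the inequality from the Cauchy--Binet formula. Since each $A_{ij}$ is square, all blocks are $m\times m$ and $A$ is $N\times N$ with $N=nm$; write $\mathbb{C}^N=V_1\oplus\cdots\oplus V_n$ according to the block structure and let $S_i=\{(i-1)m+1,\dots,im\}$ index the coordinates of $V_i$. For a matrix $M$, let $C_m(M)$ denote its $m$-th compound matrix, the $\binom{N}{m}\times\binom{N}{m}$ matrix whose $(S,T)$ entry is the minor $\det M[S,T]$, taken over $m$-element index sets $S,T$ in a fixed order. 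First I would record the standard facts $C_m(XY)=C_m(X)C_m(Y)$ and $C_m(X^*)=C_m(X)^*$, together with the key observation that the $n\times n$ principal submatrix of $C_m(A)$ on the rows and columns indexed by $S_1,\dots,S_n$ is exactly $C$, since its $(i,j)$ entry is $\det A[S_i,S_j]=\det A_{ij}$.

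For the inequality I would use the block Cholesky factorization $A=LL^*$, valid since $A\succ0$, where $L$ is block lower triangular with invertible diagonal blocks $L_{ii}$. Compounding gives $C_m(A)=C_m(L)C_m(L)^*$, so the compression $C$ to the rows $S_1,\dots,S_n$ can be written $C=XX^*$, where $X$ is the $n\times\binom{N}{m}$ matrix obtained by keeping only the rows $S_1,\dots,S_n$ of $C_m(L)$; explicitly $X_{i,S}=\det L[S_i,S]$. The Cauchy--Binet formula then yields
\begin{equation*}
\det C=\det(XX^*)=\sum_{|T|=n}\bigl|\det X_{\cdot T}\bigr|^2\ge\bigl|\det X_{\cdot\{S_1,\dots,S_n\}}\bigr|^2,
\end{equation*}
the sum running over $n$-element sets $T$ of columns of $X$. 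Because $L$ is block lower triangular, $\det L[S_i,S_j]=\det L_{ij}=0$ for $i<j$, so the distinguished minor equals $\det X_{\cdot\{S_1,\dots,S_n\}}=\prod_i\det L_{ii}$, whence $\bigl|\det X_{\cdot\{S_1,\dots,S_n\}}\bigr|^2=\prod_i|\det L_{ii}|^2=|\det L|^2=\det A$. This proves $\det A\le\det C$, which is (\ref{tho}).

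For equality, the forward direction is immediate: if $A$ is block diagonal then $L$ is block diagonal, $C$ is diagonal with entries $\det A_{ii}$, and both sides equal $\prod_i\det A_{ii}$. The reverse direction is where the work lies, and I expect it to be the main obstacle. Equality in Cauchy--Binet forces $\det X_{\cdot T}=0$ for every $n$-set $T$ other than $\{S_1,\dots,S_n\}$. Writing $x_S$ for the column of $X$ indexed by the $m$-set $S$, the columns $x_{S_1},\dots,x_{S_n}$ form a basis of $\mathbb{C}^n$ (their minor is $\prod_i\det L_{ii}\ne0$), and vanishing of the mixed minors forces each remaining column $x_S$ to lie simultaneously in every hyperplane $\operatorname{span}\{x_{S_i}:i\ne j\}$, hence $x_S=0$. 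Thus $\det L[S_i,S]=0$ for all $i$ and every $m$-set $S\notin\{S_1,\dots,S_n\}$. Finally I would show this cannot occur unless $L$ is block diagonal: if some $L_{pq}\ne0$ with $q<p$, then the rank-$m$ matrix $L[S_p,\cdot\,]$ has a nonzero column in block $q$, and swapping it for a suitable column of the invertible block $L_{pp}=L[S_p,S_p]$ produces a mixed index set $S$ with $\det L[S_p,S]\ne0$, contradicting $x_S=0$. Hence $L$, and therefore $A=LL^*$, is block diagonal. The delicate points to get right are the bookkeeping of compound-matrix signs under the fixed ordering and this last rank argument underlying the reverse equality condition.
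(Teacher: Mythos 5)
Your proposal cannot be compared against an in-paper argument, because the paper gives none: Theorem \ref{Th} is quoted as a known result of Thompson \cite{Tho61} and used as a black box (together with Theorem \ref{t4}) to prove the block extension that follows it. So your proof stands on its own, and it is correct. The key identifications are all sound: since every $A_{ij}$ is square, all blocks have a common order $m$; the matrix $C=(\det A_{ij})$ is exactly the compression of the $m$-th compound $C_m(A)$ to the rows and columns indexed by $S_1,\dots,S_n$; the block Cholesky factorization $A=LL^*$ gives $C=XX^*$ with $X_{i,S}=\det L[S_i,S]$; and Cauchy--Binet plus the block triangularity of $L$ isolates the distinguished minor $\prod_i\det L_{ii}=\det L$, yielding $\det C\ge|\det L|^2=\det A$. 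Your worry about sign bookkeeping is unfounded: with minors always taken over increasing index sets, the multiplicativity $C_m(XY)=C_m(X)C_m(Y)$ holds verbatim (it is just Cauchy--Binet), and no signs enter anywhere. The equality analysis is also sound: equality kills every mixed $n\times n$ minor of $X$; since $x_{S_1},\dots,x_{S_n}$ is a basis (its minor is $\det L\ne0$), each mixed column lies in the intersection of the $n$ coordinate hyperplanes of that basis, hence vanishes; and the Steinitz exchange then rules out any nonzero $L_{pq}$ with $q<p$. As a side benefit, your factorization $C=XX^*$ also shows $C$ is positive definite, which the paper needs (without comment) in order to apply Theorem \ref{t4} to $C$.

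One genuine caveat, which your proof correctly localizes but does not flag: the exchange step requires $m\ge2$. If $m=1$, the exchanged index set $S=\{c\}$ coincides with $S_q$, so no contradiction arises --- and indeed none should, since for $1\times1$ blocks one has $(\det A_{ij})=A$, equality holds for every positive definite $A$, and the clause ``equality iff $A$ is block diagonal'' is false as stated. This is a degeneracy in the quoted statement itself (Thompson's setting implicitly concerns genuine blocks), not a gap in your argument; you should simply note that the equality characterization assumes blocks of order at least $2$, the case $m=1$ being trivial. Finally, in the forward direction of the equality claim you can avoid Cholesky altogether: if $A$ is block diagonal then $C$ is diagonal with entries $\det A_{ii}$ and $\det C=\prod_i\det A_{ii}=\det A$ directly.
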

                 
 We point out an extension of  Theorem \ref{refined} to the block matrix case.  
 
 \begin{thm}If $A=(A_{ij})$ is an $n\times n$ block positive definite matrix with each block $A_{ij}$ square, then for any derangement $\tau\in D_n$,
   	\begin{eqnarray*} 
    \det A+\prod_{i=1}^{n}|\det A_{i,\tau(i)}|\le  \prod_{i=1}^{n}\det A_{ii}.
   	\end{eqnarray*} 
Equality holds if and only if $A$ is block diagonal.
   \end{thm}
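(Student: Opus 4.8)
The plan is to reduce the block inequality to the scalar Theorem \ref{refined} by replacing $A$ with the ordinary $n\times n$ matrix of block determinants and then bridging the two scales with Thompson's inequality (Theorem \ref{Th}). Since each block $A_{ij}$ is square, all blocks share a common size $m$, so $A$ is $nm\times nm$ and the matrix $M=(m_{ij})$ with $m_{ij}=\det A_{ij}$ is a well-defined $n\times n$ matrix. The whole argument rests on two facts about $M$: Thompson's inequality supplies $\det A\le\det M$, while Theorem \ref{refined} controls the off-diagonal product $\prod_i|\det A_{i,\tau(i)}|=\prod_i|m_{i,\tau(i)}|$. To invoke the latter, however, I first need to know that $M$ is positive semi-definite, and this is the step I expect to be the main obstacle.

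To see that $M$ is Hermitian positive semi-definite, I would factor $A=X^*X$ (possible since $A$ is positive definite) and split the columns of $X$ into consecutive blocks $X=[X_1,\dots,X_n]$ with each $X_i$ of size $nm\times m$, so that $A_{ij}=X_i^*X_j$. The Cauchy--Binet formula then expands each entry as $m_{ij}=\det(X_i^*X_j)=\sum_S\overline{\det X_i[S]}\,\det X_j[S]$, summed over the $m$-element row-index sets $S$, where $X_i[S]$ is the $m\times m$ submatrix of $X_i$ on the rows in $S$. Assembling the minors into vectors $u_i=(\det X_i[S])_S$ exhibits $M$ as a Gram matrix, $m_{ij}=\langle u_i,u_j\rangle$, so that $x^*Mx=\sum_S|\sum_i x_i(u_i)_S|^2\ge0$. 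This is the statement that the $m$-th compound turns $A$ into a positive semi-definite matrix of minors, and could alternatively be quoted from the literature.

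With positive semi-definiteness of $M$ in hand, I would chain the two estimates. Applying Theorem \ref{refined} to $M$ and $\tau$ gives $\det M+\prod_i|m_{i,\tau(i)}|\le\prod_i m_{ii}$ when $n\ge3$; for $n=2$ this relation is simply the determinant identity $\det M+|m_{12}m_{21}|=m_{11}m_{22}$, valid for every $2\times2$ positive semi-definite $M$. Combining with Thompson's $\det A\le\det M$ and using $m_{ii}=\det A_{ii}$ yields
$$\det A+\prod_{i=1}^n|\det A_{i,\tau(i)}|\le\det M+\prod_{i=1}^n|\det A_{i,\tau(i)}|\le\prod_{i=1}^n\det A_{ii},$$
which is the asserted inequality. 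For equality, note that equality throughout forces $\det A=\det M$, so the equality clause of Theorem \ref{Th} makes $A$ block diagonal; conversely, if $A$ is block diagonal then $\det A_{i,\tau(i)}=0$ for every $i$ (because $\tau(i)\ne i$) and $\det A=\prod_i\det A_{ii}$, so both sides coincide. Thus equality holds exactly when $A$ is block diagonal, and the only substantive work lies in the positive semi-definiteness of $M$.
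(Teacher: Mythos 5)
Your proposal is correct and takes essentially the same route as the paper: reduce to the $n\times n$ matrix $M=(\det A_{ij})$ via Thompson's inequality (Theorem \ref{Th}), apply Theorem \ref{refined} (in the paper's phrasing, Theorem \ref{t4}) to $M$, and settle the equality case through Thompson's equality clause. The only difference is that you explicitly verify, via Cauchy--Binet and a Gram-matrix representation, that $M$ is positive semi-definite (a fact the paper asserts without proof when it calls $(\det A_{ij})$ positive definite), and you note the trivial $n=2$ identity $\det M+|m_{12}m_{21}|=m_{11}m_{22}$; both details are correct and fill genuine gaps in the paper's two-line argument.
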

    \begin{proof} By (\ref{tho}), it suffices to work with the $n\times n$ positive definite matrix $(\det A_{ij})$. The conclusion then follows by  Theorem \ref{t4}.    
    \end{proof}
  
 \section*{Acknowledgments} Both authors are grateful to Professor Xingzhi Zhan for pointing to the early work of Xiao-Dong Zhang and  Shangjun Yang on this topic. The authors also acknowledge some comments from the referee which  help improve the presentation.   The work of M. Lin is supported by the National Natural
 Science Foundation of China (Grant No. 11601314). The work of G. Sinnamon is supported by the
 Natural Sciences and Engineering Research Council of Canada.

\end{document}